\definecolor{citegreen}{rgb}{0,0.6,0}
\definecolor{refred}{rgb}{0.8,0,0}
\theoremstyle{plain}
\newtheorem{teo}{Theorem}[section]
\newtheorem{lemma}[teo]{Lemma}
\newtheorem{prop}[teo]{Proposition}
\newtheorem{ackn}{Acknowledgments\!}
\theoremstyle{definition}
\theoremstyle{remark}
\numberwithin{equation}{section}
\def\tr{\operatornamewithlimits{tr}\nolimits}
\def\R{{{\mathbb R}}}
\def\RRR{{\mathrm R}}
\def\Ric{{\mathrm {Ric}}}
\def\Riem{{\mathrm {Riem}}}
\def\Scal{{\RRR}}
\def\KKK{\mathrm {K}}
\begin{document}
\title[Short--Time Existence of the Second Order RG--Flow in Dimension
Three]{Short--Time Existence of the Second Order Renormalization
  Group Flow in Dimension Three}

\author[Laura Cremaschi]{Laura Cremaschi}
\address[Laura Cremaschi]{Scuola Normale Superiore, Piazza dei Cavalieri 7, Pisa, Italy, 56126}
\email[L. Cremaschi]{laura.cremaschi@sns.it}

\author[Carlo Mantegazza]{Carlo Mantegazza}
\address[Carlo Mantegazza]{Scuola Normale Superiore, Piazza dei Cavalieri 7, Pisa, Italy, 56126}
\email[C. Mantegazza]{c.mantegazza@sns.it}

\date{\today}

\begin{abstract} Given a compact three--manifold together with a
  Riemannian metric, we prove the short--time existence of a solution
  to the renormalization group flow, truncated at the second order
  term, under a suitable hypothesis on the sectional curvature of the
  initial metric.
\end{abstract}

\maketitle

\tableofcontents

\section{Introduction}

The \emph{renormalization group} (RG) arises in modern theoretical
physics as a method to investigate the changes of a system
viewed at different distance scales. Since its introduction in the
early~'50, this set of ideas has given rise to significant
developments in \emph{quantum field theory} (QFT) and opened 
connections between contemporary physics and Riemannian geometry. In
spite of this, the RG still lacks of a strong mathematical foundation.\\
In this paper we deal with a particular example in string theory,
the flow equation for the world--sheet nonlinear sigma--models, and
we try to analyze the contribution given from its second order
truncation.
More precisely, Let $S$ be the classical (harmonic map) action
\begin{equation*}
S(\varphi)=\frac{1}{4\pi a}\int_{\Sigma}\tr_h(\varphi^*g)\,d\mu_h
\end{equation*}
where $\varphi:\Sigma\to M^n$ is a smooth map between a surface $(\Sigma, h)$ and
a Riemannian manifold $(M^n,g)$ of dimension $n\geq 3$. The quantity $a>0$ 
is the so--called \emph{string coupling constant}. 
Roughly speaking, in order to control  the path integral quantization of
the action $S$, one introduces a cut--off momentum $\Lambda$ which
parametrizes the spectrum of fluctuations of the theory as the distance
scale is varied as $1/\Lambda \rightarrow 1/\Lambda^\prime$. This formally
generates a flow (the renormalization group flow) in the space of actions 
which is controlled by the induced scale--dependence in $(M^n, g)$. 
Setting $\tau:=-\ln(\Lambda/\Lambda^\prime)$, one thus considers the
so--called {\em beta functions} $\beta$, associated with the
renormalization group of the theory and defined by the formal flow
$g(\tau)$ satisfying
\begin{equation*}
\frac{\partial g_{ik}(\tau)}{\partial \tau}= - \beta_{ik}.
\end{equation*}
In the perturbative regime (that is, when $a\vert\Riem(g)\vert\ll 1$)
the beta functions $\beta_{ik}$ can be expanded in powers of $a$, with 
coefficients which are polynomial in the curvature tensor of the 
metric $g$ and its derivatives. As the quantity $a\vert\Riem(g)\vert$ is 
supposed to be very small, the first order truncation should provide a good
approximation of the full RG--flow
\begin{equation*}
\frac{\partial g_{ik}}{\partial \tau}=-a\RRR_{ik} + o(a)\,,
\end{equation*}
as $a\to 0$.\\
Hence, the first order truncation (with the substitution $\tau=t/2a$) coincides with the
\emph{Ricci flow} $\partial_t g=-2\Ric$, as noted by 
Friedan~\cite{friedan1,friedan2} and Lott~\cite{lott3}, see also~\cite{carfora2}.\\
It is a well--known fact that generally the Ricci flow
becomes singular in finite time and in~\cite{hamilton1} Hamilton
proved that at a finite singular time $T>0$,
the Riemann curvature blows up. Then, near a singularity, 
the Ricci flow is no longer a valid approximation of the behavior
of the sigma--model. From the physical point of view, it appears
then relevant to possibly consider the coupled flow generated by a more general
action, as in~\cite{carfora1,olisunwoo}.

Another possibility could be to consider also the second
order term in the expansion of the beta functions, whose coefficients
are quadratic in the curvature and therefore are (possibly)
dominating, even when $a\vert\Riem(g)\vert\to0$. The resulting flow
is called {\em two--loop} RG--flow
\begin{equation}\label{RGeqorig}
\frac{\partial g_{ik}}{\partial \tau}=-a\RRR_{ik}
-\frac{a^2}{2}\RRR_{ijlm}\RRR_{kstu}g^{js}g^{lt}g^{mu}\,,
\end{equation}
see~\cite{jajomo}. We refer to it as RG$^{2,a}$--flow.

In~\cite{oliynyk1} Oliynyk investigates the behavior of such flow in
dimension two, proving that it can differ substantially from the Ricci
flow. In~\cite{gunoliynik} Guenther and Oliynyk prove the existence and
the stability of the two--loop RG--flow on the $n$--dimensional torus, while in negative constant curvature they prove stability for a modified RG--flow by diffeomorphism and scaling. In~\cite{gigunisen} Gimre,
Guenther and Isenberg study the flow on $n$--dimensional compact
manifolds with constant sectional curvature, observing that in negative
curvature, the asymptotic behavior of the flow depends on the value
of the coupling constant $a$ and of the sectional curvature. In the same
paper, the authors also focus on three--dimensional locally homogeneous
spaces, where the strong assumptions
on the geometry of the initial metric allow to reduce the PDE
to a system of ODEs.\\

The curvature tensor of a Riemannian manifold $(M^n,g)$ is
defined, as in~\cite{gahula}, by
$$
\Riem(X,Y)Z=\nabla_{Y}\nabla_{X}Z-\nabla_{X}\nabla_{Y}Z+\nabla_{[X,Y]}Z\,,
$$ 
while the associated $(4,0)$--tensor is defined by $\Riem(X,Y,Z,T)=g(\Riem(X,Y)Z,T)$.
In local coordinates, we have 
$$
\RRR_{ijkl}=g\Big(\Riem\Big(\frac{\partial}{\partial
  x^i},\frac{\partial}{\partial x^j}\Big)
\frac{\partial}{\partial x^k},\frac{\partial}{\partial x^l}\Big)\,,
$$
the Ricci tensor is then obtained by tracing $\RRR_{ik}=g^{jl}\RRR_{ijkl}$.\\ 
The sectional curvature of a plane $\pi\in T_pM^n$ spanned by a pair of vectors $X,Y\in
T_pM^n$ is defined as
$$
\KKK(\pi)=\KKK(X,Y)=\frac{\Riem(X,Y,X,Y)}{g(X,X)g(Y,Y)-g(X,Y)^2}.
$$

After rescaling the flow parameter $\tau\to t/2a$ in equation~\eqref{RGeqorig}, 
the RG$^{2,a}$--flow is given by
\begin{equation*}
\partial_t g_{ik}=-2\RRR_{ik}-a\RRR_{ijlm}\RRR_{kstu}g^{js}g^{lt}g^{mu}\,,
\end{equation*}
which can be seen as a sort of ``perturbation'' of the Ricci
flow $\partial_t g_{ik}=-2\RRR_{ik}$.

In the paper, we are going to consider the short--time existence
of this flow for an initial three--dimensional, smooth, compact Riemannian
manifold.\\
In the special three--dimensional case, thanks to the algebraic decomposition of the Riemann tensor, the evolution equation has the following expression.
\begin{equation*}
\partial_t g_{ik}=-2\RRR_{ik}-a(2\RRR\RRR_{ik}-2\RRR_{ik}^2+2|\Ric|^2g_{ik}-\RRR^2g_{ik})\,,
\end{equation*}
where $\RRR^2_{ik}=\RRR_{ij}\RRR_{lk}g^{jl}$.

\begin{teo}\label{ST}
Let $(M^3, g_0)$ be a compact, smooth, three--dimensional Riemannian
manifold and $a\in\R$. Assume that the sectional curvature $\KKK_0$ of the initial metric $g_0$ satisfies
\begin{equation}\label{cond}
1+2a\KKK_0(X,Y)>0
\end{equation}
for every point $p\in M^3$ and vectors $X,Y\in T_pM^3$. Let
\begin{equation*}
Lg_{ik}=-2\RRR_{ik}-a\RRR_{ijlm}\RRR_{kstu}g^{js}g^{lt}g^{mu}\,,
\end{equation*}
then, there exists some $T>0$ such that the Cauchy problem
\begin{equation}\label{RG}
\displaystyle
\left\{ 
\begin{array}{ll}
\partial_t g=Lg\\
g(0)=g_0
\end{array}
\right.
\end{equation}
admits a unique smooth solution $g(t)$ for $t\in [0,T)$.
\end{teo}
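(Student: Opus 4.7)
\medskip

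\noindent\textbf{Proof proposal.} The strategy is the classical DeTurck trick adapted to the second--order operator $L$. The operator $L$ is diffeomorphism invariant, hence its linearization has a large kernel (the pure gauge directions) and cannot be strongly elliptic on its own. I would introduce a background connection $\widetilde\Gamma$ (for instance the Levi--Civita connection of $g_0$) and the associated DeTurck vector field
\begin{equation*}
W^k = g^{pq}\bigl(\Gamma^k_{pq}(g) - \widetilde\Gamma^k_{pq}\bigr),
\end{equation*}
and study the modified flow $\partial_t g = Lg + \mathcal{L}_W g =: \widetilde L g$ with the same initial datum $g_0$. Solutions of~\eqref{RG} and of the modified system are equivalent up to pulling back by the one--parameter family of diffeomorphisms generated by $-W$, so it suffices to produce a short--time solution of the modified problem and then transfer it back.

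The core task is then to show that, under~\eqref{cond}, the operator $\widetilde L$ is strongly elliptic at $g_0$ (more precisely, that its linearization at $g_0$ is strongly elliptic in the sense of Petrovskii on symmetric $2$--tensors). I would proceed by computing the principal symbol $\sigma_{\widetilde L}(\xi)$ at a nonzero covector $\xi$, splitting $\widetilde L$ as $L_1 + a L_2$, where $L_1 = -2\Ric + \mathcal{L}_W g$ is the usual Ricci--DeTurck operator, whose symbol is the well--known positive definite $\sigma_{L_1}(\xi)h = |\xi|^2 h$ (see, e.g., Hamilton's and DeTurck's original arguments). The quadratic piece $L_2 g = -\RRR_{ijlm}\RRR_{kstu}g^{js}g^{lt}g^{mu}$ contributes to the symbol only through the linearization of the Riemann tensor, since the other Riemann factor is algebraic in $g$; using the standard expression
\begin{equation*}
(\partial_h \RRR)_{ijkl}=\tfrac12\bigl(\nabla^2_{ik}h_{jl}+\nabla^2_{jl}h_{ik}-\nabla^2_{il}h_{jk}-\nabla^2_{jk}h_{il}\bigr)+\text{l.o.t.},
\end{equation*}
one gets that $\sigma_{L_2}(\xi)h$ is an explicit bilinear expression in $\xi$ and the Riemann tensor of $g_0$ acting on $h$.

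The crucial algebraic step, and the main obstacle I anticipate, is in dimension three: using the three--dimensional decomposition $\RRR_{ijkl}=\RRR_{ik}g_{jl}-\RRR_{il}g_{jk}+\RRR_{jl}g_{ik}-\RRR_{jk}g_{il}-\tfrac{\RRR}{2}(g_{ik}g_{jl}-g_{il}g_{jk})$ one can rewrite $\sigma_{L_2}(\xi)h$ purely in terms of $\xi$, the Ricci tensor and the scalar curvature. After diagonalizing the Ricci tensor at $p$ and taking $\xi$ to be a coordinate covector, the quadratic form $h\mapsto \langle \sigma_{\widetilde L}(\xi)h,h\rangle$ should split into blocks whose positivity is controlled precisely by the quantities $1+2a\KKK_0(e_i,e_j)$, where $e_i,e_j$ are eigenvectors of $\Ric_0$; hypothesis~\eqref{cond} is exactly what guarantees that each such block has positive eigenvalues. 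This is the computation I expect to absorb most of the technical effort.

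Once strong ellipticity of $\widetilde L$ at $g_0$ is established, the short--time existence of a smooth solution of the modified Cauchy problem follows from the standard theory of quasilinear strongly parabolic systems on compact manifolds (in the form used by Hamilton and DeTurck for the Ricci flow, or via a linearization plus fixed--point or implicit function theorem argument in appropriate H\"older or Sobolev spaces). Uniqueness for~\eqref{RG} is then deduced in the usual way from uniqueness for the DeTurck--modified flow combined with the uniqueness of the associated harmonic map heat flow identifying solutions related by gauge.
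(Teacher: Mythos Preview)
Your plan is essentially the paper's: DeTurck's trick to kill the gauge directions, computation of the principal symbol using the linearization of $\Riem$, simplification via the three--dimensional decomposition of $\Riem$ in terms of $\Ric$, and uniqueness via the harmonic map heat flow.

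One technical point deserves care. You propose to diagonalize the full Ricci tensor at $p$ and then ``take $\xi$ to be a coordinate covector''. But ellipticity has to be checked for \emph{every} nonzero $\xi$, and there is no reason an arbitrary $\xi$ should be aligned with an eigenvector of $\Ric_0$. The paper instead fixes $\xi$ first, takes an orthonormal frame with $e_1=\xi^\sharp$, and then only rotates inside the plane $e_1^\perp$ so that $\Ric_0(e_2,e_3)=0$ (i.e.\ diagonalizes the restriction of $\Ric_0$ to $\xi^\perp$, not the whole tensor). In that frame the $6\times 6$ symbol becomes upper triangular, and its three nontrivial eigenvalues are
\[
1+2a\KKK_0(e_1,e_2),\qquad 1+2a\KKK_0(e_1,e_3),\qquad 1+a\bigl(\KKK_0(e_1,e_2)+\KKK_0(e_1,e_3)\bigr),
\]
sectional curvatures of planes \emph{through $\xi^\sharp$}, not through eigenvectors of $\Ric_0$. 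Since $\xi$ is arbitrary this is exactly why one needs~\eqref{cond} at \emph{all} $2$--planes. If you stick with your basis choice you will still reach the same conclusion, but the matrix will not be triangular and the eigenvalue analysis is messier; the paper's order of operations (adapt the frame to $\xi$ first, then partially diagonalize $\Ric_0$) is what makes the computation clean.
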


Notice that, even if not physically relevant, in this theorem we also allow $a<0$. In such case the condition on the initial metric becomes
\begin{equation*}
\KKK_0(X,Y)<-\frac{1}{2a}
\end{equation*}
which is clearly satisfied by every manifold with negative curvature.\\
Any manifold with positive curvature satisfies instead condition~\eqref{cond}, for every $a>0$.

\medskip

\begin{ackn} We are indebted with Mauro Carfora for several valuable
  suggestions.\\
The authors were partially supported by
  the Italian FIRB Ideas ``Analysis and Beyond''.
\end{ackn}

\section{The Principal Symbol of the Operator $L$}
\label{symbsec}

The evolution problem involves a fully nonlinear second--order differential operator 
$Lg$, which, as for the Ricci flow, can only be weakly elliptic, due
to the invariance of the curvature tensors by the action of the
group of diffeomorphisms of the manifold $M^n$. Hamilton in~\cite{hamilton1} proved the short--time
existence of solutions of the Ricci flow using the Nash--Moser
implicit function theorem, showing that the flow satisfies a
certain first--order integrability condition, namely the contracted
second Bianchi identity. In the present paper we
establish the short--time existence using the so called {\em DeTurck's trick} in~\cite{deturck,deturck2},
following the line of Buckland that in~\cite{buckland} showed the
short--time existence of solutions of the {\em cross curvature flow} (see~\cite{hamchow}) in dimension three, via the same method.\\
From the general existence theory of nonlinear parabolic PDEs
(see~\cite[Chapter~4, Section~4]{aubin0},~\cite{friedman}
or~\cite{mantmart1}, for instance) it follows that the evolution 
equation $\frac{\partial}{\partial t}g=Lg$ 
admits a unique smooth solution for short time if the linearized
operator around the initial data 
$DL_g(h)=\left.\frac{d}{ds}\right\vert_{s=0}L(g+sh)$ is strongly elliptic, that
is, if its principal symbol $\sigma_{\xi}(DL_g)$ has all the eigenvalues with
uniformly positive real parts for any cotangent vector $\xi\neq 0$. We will see that, under the hypotheses of Theorem~\ref{ST}, 
the principal symbol of this linearized operator is nonnegative definite and, 
even if it always contains some zero eigenvalues, 
such zero eigenvalues come only from the diffeomorphism invariance. 
This will allow us to apply DeTurck's trick 
to the RG$^{2,a}$--flow.

We start computing the linearized operator $DL_g$ of the
operator $L$ at a metric $g$.\\
The Riemann and Ricci tensors have the following linearizations,
see~\cite[Theorem~1.174]{besse} or~\cite{topping1}.
\begin{eqnarray*}
D\Riem_g(h)_{ijkm}&=&\frac{1}{2}\Big(-\nabla_j\nabla_mh_{ik}-\nabla_i\nabla_kh_{jm}
+\nabla_i\nabla_mh_{jk}+\nabla_j\nabla_kh_{im}\Big)+{\mathrm  {LOT}}\\
D\Ric_g(h)_{ik}&=& \frac{1}{2}\Big(-\Delta
h_{ik}-\nabla_i\nabla_k\tr(h)
+\nabla_i\nabla^th_{tk}+\nabla_k\nabla^th_{it}\Big)+{\mathrm {LOT}}
\end{eqnarray*}
where we use the metric $g$ to lower and upper indices and ${\mathrm {LOT}}$ stands for {\em lower order terms}.\\
Then, the linearized of $L$ around $g$, for every $h\in S^2M^n$, is given by 
\begin{eqnarray*}
DL_g(h)_{ik}&=& -2D\Ric_g(h)_{ik}-aD\Riem_g{(h)_i}^{stu}\RRR_{kstu}- a\RRR_{istu}D\Riem_g{(h)_k}^{stu}+{\mathrm {LOT}}\\
&=&  \Delta h_{ik}+\nabla_i\nabla_k\tr(h)-\nabla_i\nabla^th_{tk}-\nabla_k\nabla^th_{it}\\
&&-\frac{a}{2}\RRR_{kstu}\big(\nabla^s\nabla^th_i^u+\nabla_i\nabla^uh^{st}-\nabla^s\nabla^uh_i^t-\nabla_i\nabla^th^{su}\big)\\
&&-\frac{a}{2}\RRR_{istu}\big(\nabla^s\nabla^th_k^u+\nabla_k\nabla^uh^{st}-\nabla^s\nabla^uh_k^t-\nabla_k\nabla^th^{su}\big)+{\mathrm {LOT}}\\
&=& \Delta h_{ik}+\nabla_i\nabla_k\tr(h)-\nabla_i\nabla^th_{kt}-\nabla_k\nabla^th_{it}\\
&& +a\RRR_{kstu}\big(\nabla_i\nabla^th^{su}-\nabla^s\nabla^th_i^u\big) + a\RRR_{istu}\big(\nabla_k\nabla^th^{su}-\nabla^s\nabla^th_k^u\big)+{\mathrm {LOT}}
\end{eqnarray*}
where the last passage follows from the symmetries of the Riemann tensor (interchanging the last two indices makes it change sign).

Now we obtain the principal symbol of the linearized operator in the direction
of an arbitrary cotangent vector $\xi$ by replacing each covariant derivative
$\nabla_{\alpha}$ with the corresponding component $\xi_{\alpha}$, 
\begin{eqnarray*}
\sigma_{\xi}(DL_g)(h)_{ik} &=& \xi^t\xi_th_{ik}+\xi_i\xi_k\tr(h)-\xi_i\xi^th_{kt}-\xi_k\xi^th_{it}\\ \nonumber
&& +a\RRR_{kstu}\big(\xi_i\xi^th^{su}-\xi^s\xi^th_i^u\big)+ a\RRR_{istu}\big(\xi_k\xi^th^{su}-\xi^s\xi^th_k^u\big)\,.\nonumber
\end{eqnarray*}
Since the symbol is homogeneous, we can assume that $|\xi|_g=1$,
furthermore, we can assume to do all the following computations in an 
orthonormal basis $\{e_1,\dots,e_n\}$ of $T_pM^n$ such that
$\xi=g(e_1,\cdot)$, hence, $\xi_i=0$ for $i\neq 1$.\\
Then, we obtain,
\begin{eqnarray*}
\sigma_{\xi}(DL_g)(h)_{ik} 
&=& h_{ik}+\delta_{i1}\delta_{k1}\tr(h)-\delta_{i1}\delta^{t1}h_{tk}-\delta_{k1}\delta^{t1}h_{it}\\
&&+a\RRR_{kstu}\big(\delta_{i1}\delta^{t1}h^{su}-\delta^{s1}\delta^{t1}h_i^u\big)
+a\RRR_{istu}\big(\delta_{k1}\delta^{t1}h^{su}-\delta^{s1}\delta^{t1}h_k^u\big)\\
&=&
h_{ik}+\delta_{i1}\delta_{k1}\tr(h)-\delta_{i1}h_{1k}-\delta_{k1}h_{i1}\\
&&+a\RRR_{ks1u}\delta_{i1}h^{su}-a\RRR_{k11u}h_i^u
+a\RRR_{is1u}\delta_{k1}h^{su}-a\RRR_{i11u}h_k^u\,.
\end{eqnarray*}
So far the dimension $n$ of the manifold was arbitrary, now we carry
out the computation in the special case $n=3$ 
(using again the symmetries of the Riemann tensor), 
\begin{equation*}
\sigma_{\xi}(DL_g)\begin{pmatrix}
h_{11}\\ h_{12}\\ h_{13}\\ h_{22}\\ h_{33}\\ h_{23}\\
\end{pmatrix}
=
\begin{pmatrix}
h_{22}(1+2a\RRR_{1212})+h_{33}(1+2a\RRR_{1313})+h_{23}(4a\RRR_{1213})\\
h_{33}a\RRR_{1323}+h_{23}a\RRR_{1223}\\
h_{22}a\RRR_{1232}+h_{23}a\RRR_{1332}\\
h_{22}(1+2a\RRR_{1212})+h_{23}2a\RRR_{1213}\\
h_{33}(1+2a\RRR_{1313})+h_{23}2a\RRR_{1213}\\
h_{22}a\RRR_{1213}+h_{33}a\RRR_{1213}+h_{23}(1+a\RRR_{1212}+a\RRR_{1313})\\
\end{pmatrix}\,.
\end{equation*}
Then, we conclude that
\begin{equation*}
\sigma_{\xi}(DL_g)=\begin{pmatrix}
0 & 0 & 0 & 1+2a\RRR_{1212} & 1+2a\RRR_{1313} & 4a\RRR_{1213}\\
0 & 0 & 0 & 0 & a\RRR_{1323} & a\RRR_{1223}\\
0 & 0 & 0 & a\RRR_{1232} & 0 & a\RRR_{1332}\\ 
0 & 0 & 0 & 1+2a\RRR_{1212} & 0 & 2a\RRR_{1213}\\
0 & 0 & 0 & 0 & 1+2a\RRR_{1313} & 2a\RRR_{1213}\\
0 & 0 & 0 & a\RRR_{1213} & a\RRR_{1213} & 1+a\RRR_{1212}+a\RRR_{1313}\\
\end{pmatrix}\,.
\end{equation*}
As expected, in the kernel of the principal symbol there is at least the
three--dimensional space of forms 
$h=\xi\otimes\nu+\nu\otimes\xi\in S^2M^3$ 
where $\nu$ is any cotangent vector, that is, the variations of the metric which are tangent to the
orbits of the group of diffeomorphisms (see~\cite[Chapter~3, Section~2]{chknopf} for more details on this).\\
Now we use the algebraic decomposition of the Riemann tensor in
dimension three in order to simplify the computation of the other
eigenvalues.\\
We recall that
\begin{equation*}
\RRR_{ijkl}=(\Ric\varowedge g)_{ijkl}-\frac{\RRR}{4}(g\varowedge g)_{ijkl}
\end{equation*}
where $\RRR$ denotes the scalar curvature, i.e. the trace of the Ricci
tensor, and the symbol $\varowedge$ denotes the Kulkarni--Nomizu product of
two symmetric bilinear forms $p$ and $q$, defined by
\begin{equation*}
(p\varowedge q)(X,Y,Z,T)= p(X,Z)q(Y,T)+p(Y,T)q(X,Z)-p(X,T)q(Y,Z)-p(Y,Z)q(X,T)\,,
\end{equation*}
for every tangent vectors $X,Y,Z,T$.\\
By means of the expression of the Riemann tensor in terms of the Ricci
tensor and since we are in an 
orthonormal basis, the principal symbol can be expressed in the simpler form
\begin{equation*}
\sigma_{\xi}(DL_g)=\begin{pmatrix}
0 & 0 & 0 & 1+a(\RRR-2\RRR_{33}) & 1+a(\RRR-2\RRR_{22}) & 4a\RRR_{23}\\
0 & 0 & 0 & 0 & a\RRR_{12} & -a\RRR_{13}\\
0 & 0 & 0 & a\RRR_{13} & 0 & -a\RRR_{12}\\ 
0 & 0 & 0 & 1+a(\RRR-2\RRR_{33}) & 0 & 2a\RRR_{23}\\
0 & 0 & 0 & 0 & 1+a(\RRR-2\RRR_{22}) & 2a\RRR_{23}\\
0 & 0 & 0 & a\RRR_{23} & a\RRR_{23} & 1+a\RRR_{11}\\
\end{pmatrix}\,.
\end{equation*}

In order to apply the argument of DeTurck in the next section, we need the weak 
ellipticity of the linearized operator. To get that we have to 
compute the eigenvalues of the minor
\begin{equation*}
A=
\begin{pmatrix}
1+a(\RRR-2\RRR_{33}) & 0 & 2a\RRR_{23}\\
0 & 1+a(\RRR-2\RRR_{22}) & 2a\RRR_{23}\\
a\RRR_{23} & a\RRR_{23} & 1+a\RRR_{11}\\
\end{pmatrix}\,.
\end{equation*}

We claim that with a suitable orthonormal change of 
the basis of the plane $span\{e_2,e_2\}=e_1^{\perp}$ we can always
get an orthonormal basis $\{e'_1,e'_2,e'_3\}$ of $T_pM^3$ such that 
$e'_1=e_1$ and $\RRR'_{23}=\Ric(e'_2,e'_3)=0$.\\
Indeed, if $\{e'_2,e'_3\}$ is any orthonormal basis of
$e_1^{\perp}$, we can write
\begin{equation*}
e'_2=\cos\alpha e_2+\sin\alpha e_3 \qquad e'_3=-\sin\alpha e_2+\cos\alpha e_3
\end{equation*}
for some $\alpha\in[0,2\pi)$. Plugging this into the expression of the Ricci tensor, we obtain
\begin{eqnarray*}
\RRR'_{23}&=&\cos\alpha\sin\alpha(\RRR_{33}-\RRR_{22})+(\cos^2\alpha-\sin^2\alpha)\RRR_{23}\\
&=& \frac{1}{2}\sin(2\alpha)(\RRR_{33}-\RRR_{22})+ \cos(2\alpha)\RRR_{23}\,.
\end{eqnarray*}
Hence, in order to have $\RRR'_{23}=0$, it is sufficient to choose 
\begin{equation*} 
\alpha = \left\{ 
\begin{array}{cl} 
\frac{\pi}{4} & \text{ } \text{ if } \RRR_{22}=\RRR_{33},\\ 
\frac{1}{2}\arctan(\frac{2\RRR_{23}}{\RRR_{22}-\RRR_{33}}) & \text{ } \text{ otherwise.}
\end{array} \right.
\end{equation*} 
The matrix written above represents the symbol $\sigma_{\xi}(DL_g)$ 
with respect to a generic orthonormal basis where the first vector
coincides with $g(\xi,\cdot)$, so with this change of basis we obtain
\begin{equation*}
\sigma_{\xi}(DL_g)=\begin{pmatrix}
0 & 0 & 0 & 1+a(\RRR-2\RRR_{33}) & 1+a(\RRR-2\RRR_{22}) & 0\\
0 & 0 & 0 & 0 & a\RRR_{12} & -a\RRR_{13}\\
0 & 0 & 0 & a\RRR_{13} & 0 & -a\RRR_{12}\\ 
0 & 0 & 0 & 1+a(\RRR-2\RRR_{33}) & 0 & 0\\
0 & 0 & 0 & 0 & 1+a(\RRR-2\RRR_{22}) & 0\\
0 & 0 & 0 & 0 & 0 & 1+a\RRR_{11}\\
\end{pmatrix}\,.
\end{equation*}
Hence, the other three eigenvalues of the matrix $\sigma_{\xi}(DL_g)$ are the diagonal elements of the matrix
\begin{equation*}
A=
\begin{pmatrix}
1+a(\RRR-2\RRR_{33}) & 0 & 0\\
0 & 1+a(\RRR-2\RRR_{22}) & 0\\
0 & 0 & 1+a\RRR_{11}\\
\end{pmatrix}\,,
\end{equation*}
that is,
\begin{equation*}
\lambda_1=1+a(\RRR-2\RRR_{33})\,,
 \quad \lambda_2=1+a(\RRR-2\RRR_{22})\,, \quad\lambda_3=1+a\RRR_{11}\,.
\end{equation*}

Now we recall that, if $\{e_j\}_{j=1,\dots, n}$ is an orthonormal
basis of the tangent space, the Ricci quadratic form is the sum of the
sectional curvatures, 
\begin{equation*}
\RRR_{ii}=\sum_{j\neq i}\KKK(e_i,e_j)
\end{equation*}
and the scalar curvature $\Scal$ is given by 
\begin{equation*}
\Scal=\sum_{i=1}^n\RRR_{ii}=2\sum_{i<j}\KKK(e_i,e_j)\,.
\end{equation*}
Then, in dimension three, denoting by $\alpha=\KKK(e_2,e_3)$,
$\beta=\KKK(e_1,e_3)$ and $\gamma=\KKK(e_1,e_2)$, we obtain that the
above eigenvalues are 
\begin{equation*}
\lambda_1=1+2a\gamma\,,\qquad \lambda_2=1+2a\beta\,,\qquad \lambda_3=1+a(\beta+\gamma)\,.
\end{equation*}

It is now easy to see, by the arbitrariness of the cotangent vector $\xi$, that 
these three eigenvalues are positive, hence, the
operator $L$ is weakly elliptic, if and only if all the sectional curvatures $\KKK(X,Y)$ of $(M^3,g)$
satisfy $1+2a\KKK(X,Y)>0$. If this expression is always positive, then 
there are exactly three zero eigenvalues, due to the diffeomorphism invariance of
the operator $L$.

Following the work of DeTurck~\cite{deturck,deturck2} (see also~\cite{bour1}),
we show that Problem~\eqref{RG} is equivalent 
to a Cauchy problem for a strictly 
parabolic operator, modulo the action of the diffeomorphism group of
$M^n$.\\
Given a vector field $V\in TM^n$, we will denote the Lie derivative
along $V$ with $\mathcal{L}_V$.

\begin{prop}[DeTurck's Trick -- Existence part~\cite{deturck,deturck2}]\label{dtr1}
Let $(M^n,g_0)$ be a compact Riemannian manifold. \\
Let $L:S^2M^n\to S^2M^n$ and $V:S^2M^n\to TM^n$ be differential operators such that $L$ is geometric,  
that is, for every smooth diffeomorphism $\psi:M^n\to M^n$ satisfying $L(\psi^*g)=\psi^* (Lg)$.
If the linearized operator $D(L-\mathcal{L}_V)_{g_0}$ is
strongly elliptic, then the problem
\begin{equation*}
\displaystyle
\left\{ 
\begin{array}{ll}
\partial_t g=Lg\\
g(0)=g_0
\end{array}
\right.
\end{equation*}
admits a smooth solution on an open interval $[0,T)$, for some $T>0$.
\end{prop}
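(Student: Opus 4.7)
The approach is the classical DeTurck trick: I would replace the degenerate equation $\partial_t g = Lg$ by a strictly parabolic modification whose short--time solvability is standard, and then undo the modification via a time--dependent family of diffeomorphisms of $M^n$, exploiting the geometric invariance of $L$.

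First, I would consider the perturbed Cauchy problem
\begin{equation*}
\partial_t \bar g = L\bar g - \mathcal{L}_{V(\bar g)} \bar g, \qquad \bar g(0) = g_0.
\end{equation*}
By hypothesis the linearization $D(L - \mathcal{L}_V)_{g_0}$ is strongly elliptic, so the right--hand side is a quasilinear second--order operator that is strongly parabolic near the initial datum (strong ellipticity persists on a neighborhood of $g_0$ by continuity of the symbol). Since $M^n$ is compact, the standard existence theory for nonlinear parabolic systems, cited at the beginning of this section, produces a unique smooth solution $\bar g(t)$ on some maximal interval $[0,T)$ with $T>0$.

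Next, I would manufacture a one--parameter family of diffeomorphisms $\varphi_t:M^n\to M^n$ by solving the ODE on $M^n$
\begin{equation*}
\partial_t \varphi_t = V(\bar g(t)) \circ \varphi_t, \qquad \varphi_0 = \mathrm{id}.
\end{equation*}
Because $V$ is a differential operator and $\bar g$ is smooth in $(t,p)$, the vector field $V(\bar g(t))$ is smooth in $(t,p)$, and the compactness of $M^n$ provides a smooth solution $\{\varphi_t\}$ defined on all of $[0,T)$, consisting of diffeomorphisms (shrinking $T$ if necessary). Setting $g(t):=\varphi_t^* \bar g(t)$, the standard identity for the time derivative of a pullback along a time--dependent flow gives
\begin{equation*}
\partial_t g(t) = \varphi_t^*\bigl(\partial_t \bar g(t)\bigr) + \varphi_t^* \mathcal{L}_{V(\bar g(t))} \bar g(t) = \varphi_t^*(L\bar g(t)),
\end{equation*}
after which the geometricity of $L$ yields $\varphi_t^*(L\bar g(t)) = L(\varphi_t^* \bar g(t)) = Lg(t)$. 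The initial condition $g(0)=g_0$ holds because $\varphi_0=\mathrm{id}$ and $\bar g(0)=g_0$.

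The only substantive obstacle is the first step, invoking short--time existence for a strongly parabolic quasilinear system on a compact manifold; but this is entirely standard given strong ellipticity of the linearization at the initial metric. The remaining two steps reduce to routine ODE theory on a compact manifold together with the algebraic cancellation enabled by the diffeomorphism invariance of $L$. The main conceptual point is simply that the Lie--derivative correction $\mathcal{L}_{V(g)}g$ fills up the kernel of $\sigma_\xi(DL_g)$ coming from diffeomorphism invariance, which is exactly what allowed the computation of the previous section to be arranged so that $D(L-\mathcal{L}_V)_{g_0}$ is strongly elliptic.
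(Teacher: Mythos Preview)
Your argument is correct and is precisely the standard DeTurck trick: solve the strongly parabolic modified flow, integrate the time--dependent vector field to a family of diffeomorphisms, and pull back using the naturality of $L$. Note, however, that the paper does not actually give a proof of this proposition: it is stated as a known result with citations to DeTurck's original papers~\cite{deturck,deturck2}, and is then applied (together with Lemma~\ref{propV}) in the proof of Theorem~\ref{ST}. So there is no ``paper's own proof'' to compare against; your write--up supplies exactly the classical argument that those references contain.
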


The uniqueness part of the statement is more delicate. It uses 
an argument using the existence and
uniqueness of solutions of the {\em harmonic map flow} 
(see~\cite[Chapter~3, Section~4]{chknopf}).

\begin{lemma}\label{propV}
Let $V:S^2M^n\to TM^n $ be ``DeTurck's'' vector field, that is defined by
\begin{equation*}
V^j(g)=-g_0^{jk}g^{pq}\nabla_p\Big(\frac{1}{2}\tr_g(g_0)g_{qk}-(g_0)_{qk}\Big)
=-\frac{1}{2}g_0^{jk}g^{pq}\big(\nabla_k(g_0)_{pq}-\nabla_p(g_0)_{qk}-\nabla_q(g_0)_{pk}\big)\,,
\end{equation*}
where $g_0$ is a Riemannian metric on $M^n$ and $g_0^{jk}$ is the matrix inverse of $g_0$. \\
The following facts hold true.
\begin{itemize}
\item[(i)] The linearization in $g_0$ of the Lie derivative in the direction $V$ is given by
\begin{align*}
(D\mathcal{L}_V)_{g_0}(h)_{ik}=&\,\frac{1}{2}g_0^{pq}\nabla^0_i\{\nabla^0_kh_{pq}-\nabla^0_ph_{qk}-\nabla^0_{q}h_{pk}\}\\
&\,+\frac{1}{2}g_0^{pq}\nabla^0_k\{\nabla^0_ih_{pq}-\nabla^0_ph_{qi}-\nabla^0_{q}h_{pi}\}+{\mathrm {LOT}}\,,
\end{align*}
where $\nabla^0$ is the Levi--Civita connection of the metric
$g_0$. Hence, its principal symbol in the direction $\xi$, with
respect to an orthonormal basis $\{(\xi)^{\flat}, e_2,\dots,e_n\}$, is 
\begin{equation*}
\sigma_{\xi}((D\mathcal{L}_V)_{g_0})=
\left(\begin{array}{c|c|c}
{\text{{$-{\mathrm {Id}_n\,\,\,\,}$}}} & 
\begin{array}{cccc}
1 & 1 & \dots & 1  \\
0 & 0 & \dots & 0 \\
\vdots & \vdots & \ddots & \vdots\\
0 & 0 & \dots & 0
\end{array} & \phantom{aaa}0\phantom{aaa} \\
  \hline
\phantom{aaa} & \phantom{aaa} & \phantom{aaa} \\
0 & 0 & 0\\
\phantom{aaa} & \phantom{aaa} & \phantom{aaa} \\
\hline
\phantom{aaa} & \phantom{aaa} & \phantom{aaa} \\
0 & 0 & 0\\
\phantom{aaa} & \phantom{aaa} & \phantom{aaa}
\end{array}\right)\,,
\end{equation*}
expressed in the coordinates
\begin{equation*}
(h_{11}, h_{12}, \dots, h_{1n}, h_{22}, h_{33},
\dots, h_{nn}, h_{23}, h_{24}, \dots, h_{n-1,n})
\end{equation*}
of any $h\in S^2M^n$.
\item[(ii)] If $\varphi: (M^n,g)\to (M^n,g_0)$ is a diffeomorphism, then $V(\varphi^*g)=-\Delta_{g,g_0}\varphi$, where the harmonic map Laplacian with respect to $g$ and $g_0$ is defined by 
\begin{equation*}
\Delta_{g,g_0}\varphi=\tr_g(\nabla(\varphi_*))
\end{equation*}
with $\nabla$ the connection defined on $T^*M^n\otimes
\varphi^*TM^n$ using the Levi--Civita connections of $g$ and $g_0$
(see~\cite[Chapter~3, Section~4]{chknopf} for more details).
\end{itemize}
\end{lemma}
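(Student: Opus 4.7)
The plan is to prove (i) by linearizing the defining formula for $V$ at $g_0$, and (ii) by recasting $V$ in terms of differences of Christoffel symbols and invoking the classical transformation law of connections under a diffeomorphism.

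For (i), the crucial observation is that $V(g_0)=0$: at $g=g_0$ each of the three covariant derivatives inside the parenthesis is simply $\nabla^0 g_0\equiv 0$. Consequently, the map $g\mapsto \mathcal{L}_{V(g)}g$ linearizes at $g_0$ to $\mathcal{L}_{DV(h)}g_0$ alone, i.e. $(D\mathcal{L}_V)_{g_0}(h)_{ik}=(g_0)_{kj}\nabla^0_i DV^j(h)+(g_0)_{ij}\nabla^0_k DV^j(h)$. To obtain the principal part of $DV^j(h)$, note that the factor $g_0^{jk}$ is independent of $g$, while the variation of $g^{pq}$ multiplies a parenthesis vanishing at $g_0$, so it contributes only lower-order terms. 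The leading term therefore comes from the $g$-Christoffel symbols hidden in $\nabla_k(g_0)_{pq}$; using the standard identity $D\Gamma^m_{kp}(h)=\tfrac{1}{2}g_0^{ml}(\nabla^0_k h_{lp}+\nabla^0_p h_{lk}-\nabla^0_l h_{kp})$ together with the symmetry of $h$, a short calculation yields $D(\nabla_k(g_0)_{pq})(h)=-\nabla^0_k h_{pq}$, and analogously for the other two derivatives. Substituting back, lowering $j$ with $g_0$, and symmetrizing in $i,k$ reproduces the displayed formula.

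For the principal symbol I work in the orthonormal frame $\{\xi^\flat,e_2,\dots,e_n\}$, so that $g_0^{pq}=\delta^{pq}$ and each $\nabla^0_\alpha$ becomes $\delta_{\alpha 1}$. A brief case analysis then shows that the output tensor has $(1,1)$-component equal to $\tr(h)-2h_{11}$, $(1,a)$-component equal to $-h_{1a}$ for $a\ge 2$, and all remaining components equal to $0$. Reading these off in the ordering given in the statement produces the displayed block matrix: the $-\mathrm{Id}_n$ block encodes the $-2h_{1a}$ contributions, while the row of $+1$'s encodes the $\tr(h)$-contribution to the $(1,1)$-slot.

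For (ii), I would first recast $V$ in the equivalent form $V^j(g)=-g^{pq}(\Gamma^j_{pq}(g)-\Gamma^j_{pq}(g_0))$: expanding the three $\nabla(g_0)$ terms in the definition, the $\Gamma(g)$ contributions cancel in pairs via $\Gamma^m_{kp}=\Gamma^m_{pk}$ except for a single surviving $-\Gamma^j_{pq}(g)$, while the remaining $\partial(g_0)$ pieces reassemble into $\Gamma^j_{pq}(g_0)$. The classical transformation law then expresses $\Gamma^k_{ij}(\varphi^*g)$ in terms of $\Gamma(g)$ pulled back under $\varphi$ plus a Hessian-of-$\varphi$ correction; substituting into $V(\varphi^*g)=-(\varphi^*g)^{pq}[\Gamma^k_{pq}(\varphi^*g)-\Gamma^k_{pq}(g_0)]$ and regrouping, the three resulting pieces, namely the Hessian-of-$\varphi$ term, the pulled-back $\Gamma(g)$ term, and the $-\Gamma(g_0)$ term, match term by term with the three pieces of the coordinate expression of $-\Delta_{g,g_0}\varphi$ (the second partial derivatives of $\varphi$, the source-Christoffel term, and the target-Christoffel term, respectively). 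The main obstacle is the bookkeeping: keeping track of source-versus-target placement of points under $\varphi$, so that both sides are properly interpreted as sections of $\varphi^*TM^n$; once the indices are organized the identity emerges as a straightforward algebraic verification.
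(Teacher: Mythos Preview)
Your argument is correct and is precisely the standard derivation. Note, however, that the paper does \emph{not} actually prove this lemma: it is stated without proof, with a pointer to Chow--Knopf for part~(ii). So there is nothing to compare against; your write-up supplies what the paper leaves implicit.

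A couple of small polish points. First, your sentence ``the $-\mathrm{Id}_n$ block encodes the $-2h_{1a}$ contributions'' is slightly off: for $a\ge 2$ the $(1,a)$-output is $-h_{1a}$, not $-2h_{1a}$, while for the $(1,1)$-slot the $-2h_{11}$ combines with the $+h_{11}$ from $\tr(h)$ to give the $-1$ in the upper-left corner. The computation you actually carried out is right; only the parenthetical description is imprecise. Second, in part~(ii) you are right to flag the source/target bookkeeping as the only real subtlety: the lemma, as stated in the paper, is a little loose about whether one means $\varphi^*g$ or $(\varphi^{-1})^*g$ and about where each side lives (as a vector field on $M$ versus a section of $\varphi^*TM$). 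Your recasting $V^j(g)=-g^{pq}\bigl(\Gamma^j_{pq}(g)-\Gamma^j_{pq}(g_0)\bigr)$ is exactly the form one wants, and from there the identification with $-\Delta_{g,g_0}\varphi$ is, as you say, a direct match of the three coordinate pieces once the pullback conventions are fixed.
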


\begin{proof}[Proof of Theorem~\ref{ST}] 
Our operator $Lg_{ik}=-2\RRR_{ik}-a\RRR_{ijlm}\RRR_{kstu}g^{js}g^{lt}g^{mu}$ is
clearly invariant under diffeomorphisms, hence, in order to show the smooth existence part in 
Theorem~\ref{ST} we only need to check that $D(L-\mathcal{L}_V)_{g_0}$
is strongly elliptic, where $V$ is the vector field defined in
Lemma~\ref{propV}. By the same lemma, with respect to the orthonormal basis $\{e_1,e'_2,e'_3\}$ introduced above, we have 
\begin{equation*}
\sigma_{\xi}(D(L-\mathcal{L}_V)_{g_0})=\begin{pmatrix}
1 & 0 & 0 & a(\RRR-2\RRR_{33}) & a(\RRR-2\RRR_{22}) & 0\\
0 & 1 & 0 & 0 & a\RRR_{12} & -a\RRR_{13}\\
0 & 0 & 1 & a\RRR_{13} & 0 & -a\RRR_{12}\\ 
0 & 0 & 0 & 1+2a\gamma & 0 & 0\\
0 & 0 & 0 & 0 & 1+2a\beta & 0\\
0 & 0 & 0 & 0 & 0 & 1+a(\beta+\gamma)\\
\end{pmatrix}\,.
\end{equation*}
Finally, by the discussion at the end of such section, we conclude that a necessary and sufficient
condition for the strong ellipticity of the linear operator 
$D(L-\mathcal{L}_{V})_{g_0}$ is then that all the sectional curvatures of $(M^3,g_0)$ satisfy 
\begin{equation*}
1+2a\KKK_0(X,Y)>0\,,
\end{equation*}
for every $p\in M^3$ and vectors $X,Y\in T_pM^3$.\\
The uniqueness of the solution can be proven exactly in the same way
as for the Ricci flow. 
Let $g_1(t)$ and $g_2(t)$ be solutions of the RG$^{2,a}$--flow with
the same initial data $g_0$. By parabolicity of the harmonic map flow,
 introduced by Eells and Sampson in~\cite{eelsam}, there exist
 $\varphi_1(t)$ and $\varphi_2(t)$ solutions of 
\begin{equation*}
\displaystyle
\left\{ 
\begin{array}{ll}
\partial_t \varphi_{i}=\Delta_{g_i,g_0}\varphi_i\\
\varphi(0)=Id_{M^3}
\end{array}
\right.
\end{equation*}
Now we define $\widetilde{g}_i=(\varphi_i^{-1})^*g_i$ and, using that
$\frac{d}{dt}\varphi^{-1}=-(\varphi^{-1})_*(\frac{d}{dt}\varphi)$, it
is easy to show that both $\widetilde{g}_1$ and $\widetilde{g}_2$ are
solutions of the Cauchy problem associated to the strong elliptic
operator $L-\mathcal{L}_V$ and starting at the same initial metric
$g_0$, hence they must coincide, by uniqueness. 
By point (ii) of Lemma~\ref{propV}, the
diffeomorphisms $\varphi_i$ also coincide because they are both the
one--parameter group generated by
$-V(\widetilde{g}_1)=-V(\widetilde{g}_2)$. Finally,
$g_1=\varphi_1^*(\widetilde{g}_1)=\varphi_2^*(\widetilde{g}_2)=g_2$
and this concludes the proof of Theorem~\ref{ST}.
\end{proof}

\section{Some Remarks}

In order to continue the study of this flow, some natural questions arise, 
one would like to find some Perelman--type entropy functionals which
are monotone along the flow, as proposed by Tseytlin in~\cite{tseytlin1};
another possibility is to investigate the evolution of the
curvatures along the flow under the hypothesis of Theorem~\ref{ST} 
and try to find (if there are) some preserved conditions in order to explore
the long--time behavior and the structure of the singularities at the maximal
time of existence.

The analysis leading to Theorem~\ref{ST} can be repeated step--by--step for the operator $L_0$, given by
$$
L_0g=-a\RRR_{ijlm}\RRR_{kstu}g^{js}g^{lt}g^{mu}\,,
$$
with associated RG$^{2,a}_0$--flow
\begin{equation*}
\partial_t g_{ik}=-a\RRR_{ijlm}\RRR_{kstu}g^{js}g^{lt}g^{mu}\,.
\end{equation*}
In this case, along the same lines, the existence of a unique smooth evolution of an initial metric  $g_0$ is guaranteed as long as
\begin{equation*}
a\KKK_0(X,Y)>0
\end{equation*}
for every point $p\in M^3$ and vectors $X,Y\in T_pM^3$. 
That is, if $a>0$ when the initial manifold has positive curvature and
if $a<0$ when it has negative curvature.\\
For geometrical purposes, this flow could be more interesting than
the RG$^{2,a}$--flow, in particular because of its scaling invariance, which
is not shared by the latter. 

Another possibility in this direction is given by the {\em squared
  Ricci flow}, that is, the evolution of an initial metric $g_0$
according to
\begin{equation*}
\partial_t g_{ik}=-a\RRR_{ij}\RRR_k^j\,,
\end{equation*}
which is scaling invariant and can be analyzed analogously, or
a ``mixing'' with the Ricci flow (non scaling invariant)
\begin{equation*}
\partial_t g_{ik}=-2\RRR_{ik}-a\RRR_{ij}\RRR_k^j\,,
\end{equation*}
for any constant $a\in\R$, as before.\\
Indeed, the principal symbol of the operator
$H=\RRR_{ij}\RRR_k^j$ can be computed as in
Section~\ref{symbsec}. The linearized of the operator $H$ around a metric $g$, for every $h\in S^2M^n$, is given by 
\begin{eqnarray*}
DH_g(h)_{ik}&=& \RRR_k^jD\Ric_g(h)_{ij} + \RRR_i^jD\Ric_g(h)_{jk} +{\mathrm {LOT}}\\
&=&  \frac{1}{2}  \RRR_k^j         \Big(-\Delta
h_{ij}-\nabla_i\nabla_j\tr(h)
+\nabla_i\nabla^mh_{mj}+\nabla_j\nabla^mh_{im}\Big)\\
&& + \frac{1}{2}\RRR_i^j \Big(-\Delta
h_{jk}-\nabla_j\nabla_k\tr(h)
+\nabla_j\nabla^mh_{mk}+\nabla_k\nabla^mh_{jm}\Big)+{\mathrm {LOT}}\,.
\end{eqnarray*}
Hence, the principal symbol in the direction of the cotangent vector $\xi$, as before, is
\begin{eqnarray*}
\sigma_{\xi}(DH_g)(h)_{ik} &=&  -\frac{1}{2}  \RRR_k^j         
\Big(\xi^m\xi_mh_{ij}+\xi_i\xi_j\tr(h)-\xi_i\xi^mh_{jm}-\xi_j\xi^mh_{im}\Big)\\
&&-\frac{1}{2}  \RRR_i^j        
\Big(\xi^m\xi_mh_{jk}+\xi_j\xi_k\tr(h)-\xi_j\xi^mh_{km}-\xi_k\xi^mh_{jm}\Big)\\
&=&  -\frac{1}{2}  \RRR_k^j         
\Big(h_{ij}+\delta_{1i}\delta_{1j}\tr(h)-\delta_{1i}h_{1j}-\delta_{1j}h_{1i}\Big)\\
&&-\frac{1}{2}  \RRR_i^j
\Big(h_{jk}+\delta_{1j}\delta_{1k}\tr(h)-\delta_{1j}h_{1k}-\delta_{1k}h_{1j}\Big)\\
&=& -\frac{1}{2}\Big(\RRR_{1k}(\delta_{1i}\tr(h)-h_{1i})+\RRR_{1i}(\delta_{1k}\tr(h)-h_{1k})\Big)\\
&& -\frac{1}{2}\Big(\RRR_k^j(h_{ij}-\delta_{1i}h_{1j})+\RRR_i^j(h_{jk}-\delta_{1k}h_{1j}) \Big)\,,
\end{eqnarray*}
where $\xi=g(e_1,\cdot)$ and $\{e_i\}$ is an orthonormal basis of
$T_pM^n$.\\
Again, by specifying the initial metric to be $g_0$ and diagonalizing the restriction of the Ricci tensor (which is still symmetric) to the hyperspace $e_1^{\perp}$, we can find an orthonormal basis $\{e'_2,\dots,e'_n\}$ of
$e_1^{\perp}$ such that $\Ric(e'_i,e'_k)=0$ if $i\not=k$, the principal symbol of the operator 
$H$, computed in the basis $\{e_1, e'_2,\dots,e'_n\}$, is described by 
\begin{align*}
\sigma_{\xi}(DH_{g_0})(h)_{11} &\,= -\frac{1}{2}\Big(2\RRR_{11}\sum_{j=2}^nh_{jj}\Big)&\\
\sigma_{\xi}(DH_{g_0})(h)_{1k} &\,= -\frac{1}{2}
\Big(2\RRR_{1k}h_{kk}+\RRR_{1k}\sum_{j\neq 1,\,k}h_{jj}+\sum_{j\neq
  1,\,k}\RRR_{1j}h_{jk}\Big)\\
\sigma_{\xi}(DH_{g_0})(h)_{kk} &\,= -\frac{1}{2}\Big(2\RRR_{kk}h_{kk}\Big)\\
\sigma_{\xi}(DH_{g_0})(h)_{ik} &\,= -\frac{1}{2}\Big((\RRR_{kk}+\RRR_{ii})h_{ik}\Big)
\end{align*}
for every $i,k\in\{2,\dots,n\}$ with $i\not= k$.\\
It is easy to see that the matrix associated to $\sigma_{\xi}(DH_{g_0})$ expressed in the coordinates
\begin{equation*}
(h_{11}, h_{12}, \dots, h_{1n}, h_{22}, h_{33},
\dots, h_{nn}, h_{23}, h_{24}, \dots, h_{n-1,n})
\end{equation*}
of $S^2M^n$ is upper triangular with $n$ zeroes on the first $n$
diagonal elements, then the next $(n-1)$ ones are the values
$-\RRR_{kk}$ for $k\in\{2,\dots,n\}$ and finally, the last
$(n-1)(n-2)/2$ ones are given by $-(\RRR_{ii}+\RRR_{kk})/2$ for every
$i,k\in\{2,\dots,n\}$ with $i\not= k$.\\
Now, applying Proposition~\ref{dtr1} with the same vector field $V$ of Lemma~\ref{propV}, the squared Ricci flow
\begin{equation*}
\partial_t g_{ik}=-a\RRR_{ij}\RRR_{lk}g^{jl}\,,
\end{equation*}
has a unique smooth solution for short time, when $a>0$ for every
initial manifold $(M^n,g_0)$ with positive Ricci 
curvature and when $a<0$, for every
initial manifold $(M^n,g_0)$ with negative Ricci curvature.

We conclude this discussion mentioning the 
{\em cross curvature flow}, introduced by Chow and Hamilton
in~\cite{hamchow}, which belongs to this ``family'' of quadratic
flows. The short--time existence and uniqueness of a smooth evolution
of every initial metric of a three--dimensional manifold 
with curvature not changing sign, was established by
Buckland in~\cite{buckland}.

\bigskip

\noindent
Note. {\em Recently, Gimre, Guenther and Isenberg extended the short--time existence of the RG$^{2,a}$--flow, Theorem~\ref{ST}, to any dimensions in~\cite{gigunisen3} (see also~\cite{gigunisen2}).}

\bigskip

\bibliographystyle{plain}
\bibliography{biblio}

\end{document}